\documentclass[a4paper,11pt,reqno]{amsart}

\usepackage{amssymb,calc}
\usepackage{tikz}
\usepackage{epsfig}
\usepackage{amsfonts}
\usepackage{amsrefs}
\usepackage{amsmath}
\usepackage{euscript}
\usepackage{amscd}
\usepackage{amsthm}
\usepackage{enumitem}
\DeclareMathAlphabet{\mathpzc}{OT1}{pzc}{m}{it}
\usepackage{enumitem}
\usepackage[numbers,sort&compress]{natbib}
\usepackage{color}
\usepackage{mathtools}
\usepackage[hypertexnames=false, colorlinks, citecolor=red, linkcolor=blue, urlcolor=red]{hyperref}
\usetikzlibrary {intersections, calc, patterns, arrows}
\usepackage{marginnote}

\usepackage[normalem]{ulem}


\newcommand {\edge}[2] {\draw [->] (#1) -- ($ (#1) ! .5 ! (#2) $); \draw[] ($ (#1) ! .5 ! (#2) $) -- (#2); }
\newcommand {\smallloop}[1] {\draw [->] (#1) arc (270:90:.2); \draw (#1) arc (270: 450: .2);}
\newcommand {\bigloop}[1] {\draw [->] (#1) arc (270:90:.3); \draw (#1) arc (270: 450: .3);}
\newcommand {\labelpoint} [4] {\filldraw [fill = white] (#1) node[xshift = #2pt, yshift = #3pt] {$#4$} circle (1.5pt);}
\newcommand{\marginextend}[1]{ \addtolength{\oddsidemargin}{-#1}  \addtolength{\evensidemargin}{-#1}
	\addtolength{\textwidth}{#1}\addtolength{\textwidth}{#1}}
\newcommand{\updownextend}[1]{ \addtolength{\topmargin}{-#1}  \addtolength{\textheight}{#1}
	\addtolength{\textheight}{#1}}
\marginextend{1.5cm}
\updownextend{0cm}
\allowdisplaybreaks[4]

\usepackage{pst-node}
\usepackage{tikz-cd}
\usepackage{mathrsfs}
\hfuzz2pt 
\DeclareFontFamily{OT1}{pzc}{}
\DeclareFontShape{OT1}{pzc}{m}{it}{<-> s * [1.10] pzcmi7t}{}
\DeclareMathAlphabet{\mathpzc}{OT1}{pzc}{m}{it}

\DeclareSymbolFont{SY}{U}{psy}{m}{n}
\DeclareMathSymbol{\emptyset}{\mathord}{SY}{'306}

\theoremstyle{plain}

\newtheorem{thm}{Theorem}[section]
\newtheorem*{thm*}{Theorem}
\newtheorem{cor}[thm]{Corollary}
\newtheorem{lem}[thm]{Lemma}

\newtheorem{defn}[thm]{Definition}
\newtheorem{rem}[thm]{Remark}

\newtheoremstyle{named}{}{}{\itshape}{}{\bfseries}{.}{.5em}{#1 \thmnote{#3}}
\theoremstyle{named}

\numberwithin{equation}{section}



\def\beq{\begin{eqnarray}}
	\def\eeq{\end{eqnarray}}
\def\beqa{\begin{eqnarray*}}
	\def\eeqa{\end{eqnarray*}}

%


\newcommand{\be}{\begin{equation}}
	\newcommand{\ee}{\end{equation}}
\newcommand{\bea}{\begin{eqnarray}}
	\newcommand{\eea}{\end{eqnarray}}
\newcommand{\Bea}{\begin{eqnarray*}}
	\newcommand{\Eea}{\end{eqnarray*}}

\newcounter{cnt1}
\newcounter{cnt2}
\newcounter{cnt3}
\newcommand{\blr}{\begin{list}{$($\roman{cnt1}$)$}
		{\usecounter{cnt1} \setlength{\topsep}{0pt}
			\setlength{\itemsep}{0pt}}}
	\newcommand{\bla}{\begin{list}{$($\alph{cnt2}$)$}
			{\usecounter{cnt2} \setlength{\topsep}{0pt}
				\setlength{\itemsep}{0pt}}}
		\newcommand{\bln}{\begin{list}{$($\arabic{cnt3}$)$}
				{\usecounter{cnt3} \setlength{\topsep}{0pt}
					\setlength{\itemsep}{0pt}}}
			\newcommand{\el}{\end{list}}
		
		

		
		
		




		
		\vspace{5mm}

		\title{On outer automorphisms of certain graph $\textrm{C}^{\ast}$-algebras}
		\author[S. Datta]{Swarnendu Datta}
		
		\author[D. Goswami]{Debashish Goswami}
		
		\author[S. Joardar]{Soumalya Joardar}

        \address[S. Datta]{Indian Institute of Science Education And Research Kolkata, Mohanpur 741246, Nadia, West Bengal, India} \email{swarnendu.datta@iiserkol.ac.in}
        
        \address[D. Goswami]{Indian Statistical Institute, 203, B.T. Road, Kolkata-700108, India} \email{debashish\textunderscore goswami@yahoo.co.in}

            \address[S. Joardar]{Indian Institute of Science Education And Research Kolkata, Mohanpur 741246, Nadia, West Bengal, India} \email{soumalya@iiserkol.ac.in}

		
		\begin{document}
			\begin{abstract}	   
		Given a countable abelian group $A$, we construct a row finite directed graph $\Gamma(A)$ such that the $K_{0}$-group of the graph $\textrm{C}^{\ast}$-algebra $\textrm{C}^{\ast}(\Gamma(A))$ is canonically isomorphic to $A$. Moreover, each element of $\textrm {Aut}(A)$ is a lift of an automorphism of the graph $\textrm{C}^{\ast}$-algebra $\textrm{C}^{\ast}(\Gamma(A))$.
			\end{abstract}
			\maketitle
            \section{Introduction}
            Given a $\textrm{C}^{\ast}$-algebra $\mathcal{C}$, it is an important problem to understand its automorphism group $\textrm {Aut}(\mathcal{C})$. The automorphism group sometimes encodes important structural information of the $\textrm{C}^{\ast}$-algebra. It also helps to construct new $\textrm{C}^{\ast}$-algebras. The reader is referred to the book by Pedersen (\cite{pederson}) for generalities of automorphism groups of $\textrm{C}^{\ast}$-algebras. Often it is difficult to understand the full automorphism group of a $\textrm{C}^{*}$-algebra. The usefulness of $K$-theory in understanding $\textrm{C}^{\ast}$-algebras is now well documented (see for example \cite{elliot,phillips}). It turns out that it also helps to understand the automorphism groups of $\textrm{C}^{\ast}$-algebras. Given a $\textrm{C}^{*}$-algebra $\mathcal{C}$,  one important normal subgroup of $\textrm {Aut}(\mathcal{C})$ is the subgroup of its inner automorphism group to be denoted by $\textrm {Inn}(\mathcal{C})$. These are of the form $c\rightarrow ucu^{\ast}$ for some unitary $u$ in the unitization of $\mathcal{C}$. One key tool to understand the automorphism group is the induced automorphism of the abelian group $K_{0}(\mathcal{C})$. Recall that by the functorial property of $K_{0}$, for any $\phi\in{\rm Aut}(\mathcal{C})$, $K_{0}(\phi)\in {\rm Aut}(K_{0}(\mathcal{C}))$. It turns out that any inner automorphism induces the trivial automorphism of the $K_{0}$-group. In fact, something more is true. There is a larger normal subgroup known as the approximately inner automorphism group (to be denoted by $\overline{\rm Inn}(\mathcal{C})$) such that each element of $\overline{\rm Inn}(\mathcal{C})$ induces the trivial automorphism on the $K_{0}$-group (see \cite{rordam}). Thus, the $K$-outer automorphism group of a $\textrm{C}^{\ast}$-algebra (to be denoted by $\textrm {Kout}(\mathcal{C})$) is naturally defined as the quotient group $\textrm {Aut}(\mathcal{C})/\overline{\textrm {Inn}}(\mathcal{C})$. Then it becomes important to understand the $K$-outer automorphisms of $\textrm{C}^{\ast}$-algebras. It is clear that if any non-trivial automorphism of $K_{0}(\mathcal{C})$ is induced by some $\phi\in{\rm Aut}(\mathcal{C})$, then $\phi$ has to be $K$-outer. We call an automorphism of the $K_{0}$ group of the form $K_{0}(\phi)$ for some $\phi\in{\rm Aut}(\mathcal{C})$ a lift. It is easy to see that all elements of ${\rm Aut}(K_{0}(\mathcal{C}))$ need not be lifts in general. For example, $K_{0}(M_{n}(\mathbb{C}))=\mathbb{Z}$. Hence ${\rm Aut}(K_{0}(M_{n}(\mathbb{C})))=\mathbb{Z}_{2}$. It is well known that all automorphisms of $M_{n}(\mathbb{C})$ are inner and, therefore, the non-trivial automorphism of $\mathbb{Z}$ is not a lift. However, there are classes of $\textrm{C}^{\ast}$-algebras whose $K$-outer automorphism groups are well understood. For example, if $\mathcal{C}$ is an AF-algebra, then one has the following short exact sequence
           \begin{displaymath}
           0\longrightarrow\overline{\rm Inn}(\mathcal{C})\hookrightarrow{\rm Aut}(\mathcal{C})\longrightarrow{\rm Aut}^{+}(K_{0}(\mathcal{C}))\longrightarrow 0,    
           \end{displaymath}
           where $\textrm {Aut}^{+}(K_{0}(\mathcal{C}))$ is the group of automorphisms of the $K_{0}$ group preserving an extra order structure. Consequently, the $K$-outer automorphism group is isomorphic to $\textrm {Aut}^{+}(K_{0}(\mathcal{C}))$ (see Exercise (7.8) of \cite{rordam}). \vspace{0.05in}\\
           \indent Keeping this context in mind, in this article, we provide a large class of $\textrm{C}^{\ast}$-algebras such that each element of the automorphism group of the $K_{0}$-group is a lift. It enables us to construct large $K$-outer automorphisms. The class is constructed out of graph $\textrm{C}^{\ast}$-algebras. Recall that given a row-finite directed graph $G$, one associates a $\textrm{C}^{\ast}$-algebra $\textrm{C}^{\ast}(G)$. One of many benefits of building $\textrm{C}^{\ast}$-algebras from directed graphs is that a lot of structural information can be obtained from the combinatorial structure of the underlying graphs. For example, the $K_{0}$-group is given by the cokernel of a linear map associated to the adjacency matrix of the graph (see \cite{raeburn}). In addition, elements of the graph automorphism group produce automorphisms of the graph $\textrm{C}^{\ast}$-algebra. For a finite, directed graph such automorphisms and their quantum version have been studied in \cite{joardar}. In this article, given any countable abelian group $A$, we construct a row-finite directed graph $\Gamma(A)$ such that $K_{0}(\textrm{C}^{\ast}(\Gamma(A)))\cong A$ in such a way that every automorphism of $A$ is a lift. Note that given a countable abelian group $A$, a graph $\textrm{C}^{\ast}$-algebra $\mathcal{C}$ with $K_{0}(\mathcal{C})\cong A$ has been constructed in \cite{sz}. In fact in \cite{sz}, given a pair of abelian groups $(A_{0},A_{1})$ where $A_{0}$ is countable and $A_{1}$ free, a graph $\textrm{C}^{\ast}$-algebra $\mathcal{C}$ has been constructed such that $K_{i}(\mathcal{C})\cong A_{i}$. However, the novelty of our construction is that every automorphism of the given $K_{0}$-group is a lift which, in general, need not be true as mentioned earlier. As a consequence, we show that the automorphism group of the $K_{0}$-group of the graph $\textrm{C}^{\ast}$-algebra is a subgroup of the $K$-outer automorphism group of the graph $\textrm{C}^{\ast}$-algebra. It is worth mentioning that recently a quantum version of outer automorphism groups has been formulated and studied for von Neumann algebras with tracial states (\cite{goswami}). We hope that the present article could act as a stepping stone in understanding the quantum version of outer automorphism groups of graph $\textrm{C}^{\ast}$-algebras in the long run.   

            \section{Preliminaries}
            \subsection{Graph $\textrm{C}^{\ast}$-algebras and their $K_{0}$-groups}
            We begin this subsection by discussing the rudiments of graph $\textrm{C}^{*}$-algebras. The reader is referred to Chapter 1 of \cite{raeburn} for details. Recall that a directed graph $G$ is a collection $(V, E, r,s)$ where $V$ is a set consisting of countably many points known as the set of vertices; $E$ is another countable set known as the set of edges; $r,s:E\rightarrow V$ are maps known as the range and source maps. For an edge $e\in E$ such that $s(e)=v\in V$ and $r(e)=w\in V$, we often write $e$ as $(v,w)$. A graph is called {\bf row-finite} if $r^{-1}(v)$ is finite for all $v\in V$. We shall only consider {\it row-finite directed} graph in this paper. Given a row-finite directed graph $G=(V,E,r,s)$, a Cuntz-Krieger family is a collection $\{\{p_{v}\}_{v\in V},\{S_{e}\}_{e\in E}\}$ where $\{p_{v}\}_{v\in V}$ are mutually orthogonal projections and $\{S_{e}\}_{e\in E}$ are partial isometries satisfying the following relations:\vspace{0.1in}\\
            (CK1) $S_{e}^{\ast}S_{e}=p_{s(e)}$\\
            (CK2) $\sum_{r(e)=v}{S_{e}S_{e}^{\ast}}=p_{v}$.
            \begin{defn}
                Let $G$ be a row-finite directed graph. Then the graph $C^{\ast}$-algebra $C^{\ast}(G)$ is defined to be the universal $C^{\ast}$-algebra generated by the Cuntz-Krieger families.
            \end{defn}
           Now let us briefly recall the $K_{0}$-group of $\textrm{C}^{\ast}(G)$ for a row-finite directed graph $G$. For details, the reader is referred to \cite{szymanski} or \cite{raeburn}. To that end let us denote the set $\{v\in V: r^{-1}(v)\neq\emptyset\}$ by $V_{E}$. We denote the free abelian groups on the sets $V_{E}$ and $V$ by $\mathbb{Z}V_{E}$ and $\mathbb{Z}V$ respectively as usual. Then define a map $B$ on an element $v\in V_{E}$ by $B(v)=v-\sum_{r(e)=v}s(e)\in\mathbb{Z}V$. Extending this map $\mathbb{Z}$-linearly on the whole of $\mathbb{Z}V_{E}$, we get a group homomorphism $B:\mathbb{Z}V_{E}\rightarrow\mathbb{Z}V$. Then the $K_{0}$ group is isomorphic to the cokernel of $B$ (see Proposition 2 of \cite{szymanski}).
           \begin{rem}
               1. Note that in terms of projections, the abelian group $K_{0}(C^{\ast}(G))$ is generated by the classes $\{[p_{v}]\}_{v\in V}$. We get the isomorphism $K_{0}(C^{\ast}(G))\cong \textrm {coker}(B)$ by mapping $[p_{v}]$ to $v$.\vspace{0.05in}\\
               \indent 2. It is also important to recall the functor $K_{0}$. Given a $C^{*}$-homomorphism $\phi:\mathcal{C}\rightarrow \mathcal{D}$ between two $C^{\ast}$-algebras, the map $K_{0}(\phi):K_{0}(\mathcal{C})\rightarrow K_{0}(\mathcal{D})$ sends a class of projection $p\in \mathcal{C}$ in the $K_{0}$-group of $\mathcal{C}$ to the class of projection $\phi(p)\in \mathcal{D}$ in $K_{0}(\mathcal{D})$ (see \cite{rordam}).
           \end{rem}
            \subsection{Automorphisms of graphs and their $\textrm{C}^{\ast}$-algebras} We continue to work with a row-finite directed graph $G=(V,E,r,s)$. \begin{defn}
                An automorphism of a row-finite directed graph $G=(V,E,r,s)$ is a bijection $\phi:V\rightarrow V$ such that $(v,w)\in E$ if and only if $(\phi(v),\phi(w))\in E$.
            \end{defn}
            \begin{rem}
                We write the set of automorphisms of a row-finite directed graph $G$ by $\textrm {Aut}(G)$. Note that if $\phi$ is an automorphism of a graph, then it is also a bijection of the edge set. We write the image of an edge $e\in E$ under $\phi$ naturally by $\phi(e)$.
            \end{rem}
            For any $\phi\in \textrm {Aut}(G)$, there is an induced automorphism $\widetilde{\phi}$ of the graph $\textrm{C}^{\ast}$-algebra given on the generating projections and partial isometries, respectively, by 
            \begin{displaymath}
                \widetilde{\phi}(p_{v}):=p_{\phi(v)}, \ \widetilde{\phi}(S_{e})=S_{\phi(e)}.
            \end{displaymath}
            Indeed, it is straightforward to check that $\{\{\widetilde{\phi}(p_{v})\}_{v\in V}, \  \{\widetilde{\phi}(S_{e})\}_{e\in E}\}$ is again a Cuntz-Krieger family, that is, they satisfy the relations (CK1) and (CK2). Therefore, by the universal property, there is a well-defined $\textrm{C}^{\ast}$-homomorphism $\widetilde{\phi}:\textrm{C}^{\ast}(G)\rightarrow \textrm{C}^{\ast}(G)$. Using $\phi^{-1}$, one can similarly define $\widetilde{\phi^{-1}}:\textrm{C}^{\ast}(G)\rightarrow \textrm{C}^{\ast}(G)$ and it is easy to see that $\widetilde{\phi^{-1}}=(\widetilde{\phi})^{-1}$ so that $\widetilde{\phi}\in\textrm {Aut}(\textrm{C}^{\ast}(G))$.\\
            \indent Given a row-finite directed graph $G$, recall the $K_{0}$-group of the $\textrm{C}^{\ast}$-algebra $\textrm{C}^{\ast}(G)$ from the previous subsection. 
            \begin{lem}
              An automorphism $\phi$ of a row finite directed graph $G$ descends to an automorphism of the abelian group $K_{0}(C^{\ast}(G))$. We denote the automorphism of $K_{0}(C^{\ast}(G))$ corresponding to a graph automorphism $\phi$ by $\underline{\phi}$.
            \end{lem}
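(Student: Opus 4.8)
The plan is to exploit the concrete model $K_0(C^\ast(G))\cong\textrm{coker}(B)$ recalled above, where $B\colon\mathbb{Z}V_E\to\mathbb{Z}V$ is given by $B(v)=v-\sum_{r(e)=v}s(e)$, and to show that the basis permutation induced by $\phi$ is natural with respect to $B$, so that it descends to the cokernel. Alternatively one can argue purely functorially using the induced $\textrm{C}^\ast$-automorphism $\widetilde{\phi}$ constructed in the previous paragraphs; I will indicate both.

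First I would observe that $\phi$ preserves the subset $V_E=\{v\in V: r^{-1}(v)\neq\emptyset\}$. Indeed, $\phi$ induces a bijection of $E$ with $r(\phi(e))=\phi(r(e))$ and $s(\phi(e))=\phi(s(e))$, so for each $v$ the map $e\mapsto\phi(e)$ restricts to a bijection from $r^{-1}(v)$ onto $r^{-1}(\phi(v))$; in particular $r^{-1}(v)\neq\emptyset$ if and only if $r^{-1}(\phi(v))\neq\emptyset$. Hence $\phi$ induces group automorphisms $\phi_\ast\colon\mathbb{Z}V\to\mathbb{Z}V$ and $\phi_\ast\colon\mathbb{Z}V_E\to\mathbb{Z}V_E$ by permuting the canonical bases, with inverse $(\phi^{-1})_\ast$ in each case.

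The key step is the intertwining identity $B\circ\phi_\ast=\phi_\ast\circ B$ on $\mathbb{Z}V_E$. By $\mathbb{Z}$-linearity it suffices to evaluate on a basis vector $v\in V_E$, and using the two properties of $\phi$ on edges,
\[
B(\phi_\ast v)=\phi(v)-\sum_{r(e)=\phi(v)} s(e)=\phi(v)-\sum_{r(e)=v}\phi(s(e))=\phi_\ast\Bigl(v-\sum_{r(e)=v} s(e)\Bigr)=\phi_\ast(B v).
\]
It follows that $\phi_\ast(\im B)\subseteq\im B$, and applying the same identity to $\phi^{-1}$ gives $\phi_\ast(\im B)=\im B$. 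Therefore $\phi_\ast$ passes to the quotient and yields an automorphism $\underline{\phi}$ of $\mathbb{Z}V/\im B=\textrm{coker}(B)\cong K_0(C^\ast(G))$, with $\underline{\phi^{-1}}=(\underline{\phi})^{-1}$. Under the identification $[p_v]\leftrightarrow v$ one checks that $\underline{\phi}$ coincides with $K_0(\widetilde{\phi})$, since $\widetilde{\phi}(p_v)=p_{\phi(v)}$ forces $K_0(\widetilde{\phi})[p_v]=[p_{\phi(v)}]$; this also gives the alternative proof, because $\widetilde{\phi}\in\textrm{Aut}(\textrm{C}^\ast(G))$ and $K_0$ is a functor.

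As for difficulties, there is essentially no obstacle. The only points requiring (routine) care are that $\phi$ preserves $V_E$ and that it intertwines $B$ with the basis permutation, both immediate from $r\circ\phi=\phi\circ r$ and $s\circ\phi=\phi\circ s$ on edges. The substance of the lemma is just the naturality of the cokernel construction (equivalently, the functoriality of $K_0$ applied to $\widetilde{\phi}$).
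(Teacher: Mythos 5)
Your proposal is correct and follows essentially the same route as the paper: check that $\phi$ preserves $V_E$, extend to the free abelian groups, verify the intertwining relation $B\circ\phi = \phi\circ B$ on basis vectors, and descend to the cokernel, with invertibility coming from running the argument for $\phi^{-1}$. The additional remark identifying $\underline{\phi}$ with $K_0(\widetilde{\phi})$ is not needed here (the paper defers it to the next lemma), but it is accurate.
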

            \begin{proof}
            Recall that $K_{0}(\textrm{C}^{\ast}(G))$ is isomorphic to the cokernel of the group homomorphism $B:\mathbb{Z}V_{E}\rightarrow\mathbb{Z}V$ given on the $\mathbb{Z}$-linear basis $\{v\}_{v\in V_{E}}$ by
            \begin{displaymath}
                B(v)=v-\sum_{e:r(e)=v}s(e).
            \end{displaymath}
            As $\phi\in\textrm {Aut}(G)$, it maps $V_{E}$ to $V_{E}$. Then extending $\phi$ $\mathbb{Z}$-linearly, we get maps from $\mathbb{Z}V_{E}$ to $\mathbb{Z}V_{E}$ and $\mathbb{Z}V$ to $\mathbb{Z}V$. We continue to denote the extensions by $\phi$. For $v\in V_{E}$,
            \begin{displaymath}
                B(\phi(v))=\phi(v)-\sum_{r(e)=\phi(v)}s(e).
            \end{displaymath}
            As $\phi$ is an automorphism of the graph $G$, for any $e\in E$ such that $r(e)=\phi(v)$, $s(e)=\phi(w)$ for some $w\in E$ such that $e^{\prime}=(w,v)\in E$ and  $\phi(e^{\prime})=e$. Consequently, 
            \begin{displaymath}
                B(\phi(v))=\phi\Big(v-\sum_{r(e^{\prime})=v}s(e^{\prime})\Big)=\phi(Bv).
            \end{displaymath}
            Therefore, by the $\mathbb{Z}$-linearity of the maps $\phi$ and $B$, we get the following commutative diagram:
            \[
\begin{tikzcd}
\mathbb{Z}V_{E} \arrow{r}{B} \arrow[swap]{d}{\phi} & \mathbb{Z}V \arrow{d}{\phi} \\
\mathbb{Z}V_{E} \arrow{r}{B} & \mathbb{Z}V
\end{tikzcd}
\]
Hence we get a well-defined group homomorphism $\underline{\phi}:{\rm coker}(B)\rightarrow {\rm coker}(B)$ and consequently a group homomorphism $\underline{\phi}:K_{0}(\textrm{C}^{\ast}(G))\rightarrow K_{0}(\textrm{C}^{\ast}(G))$. Repeating the argument with $\phi^{-1}$, we get a group homomorphism $\underline{\phi}^{-1}:K_{0}(\textrm{C}^{\ast}(G))\rightarrow K_{0}(\textrm{C}^{\ast}(G))$. It is straightforward to verify $(\underline{\phi})^{-1}=\underline{\phi}^{-1}$ proving that $\underline{\phi}\in\textrm {Aut}\Big(K_{0}(\textrm{C}^{\ast}(G))\Big)$.
            \end{proof}
            
            \begin{lem}
            \label{lift}
                Let $\phi\in\textrm {Aut}(G)$, where $G$ is a row-finite directed graph. Then the induced automorphism $\underline{\phi}\in\textrm {Aut}\Big(K_{0}(C^{\ast}(G))\Big)$ is a lift.
            \end{lem}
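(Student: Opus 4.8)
The plan is to show that $\underline{\phi}$ is realised as $K_{0}(\widetilde{\phi})$, where $\widetilde{\phi}\in\textrm{Aut}(\textrm{C}^{\ast}(G))$ is the $\textrm{C}^{\ast}$-automorphism induced by the graph automorphism $\phi$, as constructed in the preceding subsection. Since such a $\widetilde{\phi}$ exists and lies in $\textrm{Aut}(\textrm{C}^{\ast}(G))$, establishing $\underline{\phi}=K_{0}(\widetilde{\phi})$ exhibits $\underline{\phi}$ as a lift in the sense of the introduction, which is exactly the assertion.

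First I would recall the two explicit descriptions already in hand: under the isomorphism $K_{0}(\textrm{C}^{\ast}(G))\cong{\rm coker}(B)$, the class $[p_{v}]$ corresponds to the image of $v\in\mathbb{Z}V$ in ${\rm coker}(B)$, and the family $\{[p_{v}]\}_{v\in V}$ generates $K_{0}(\textrm{C}^{\ast}(G))$ as an abelian group (Remark following the definition of $B$). Hence it suffices to check that the two group endomorphisms $K_{0}(\widetilde{\phi})$ and $\underline{\phi}$ of $K_{0}(\textrm{C}^{\ast}(G))$ agree on each generator $[p_{v}]$. On one side, by the description of the functor $K_{0}$, one has $K_{0}(\widetilde{\phi})([p_{v}])=[\widetilde{\phi}(p_{v})]=[p_{\phi(v)}]$, which under the identification is the class of $\phi(v)$ in ${\rm coker}(B)$. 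On the other side, $\underline{\phi}$ was defined precisely as the map induced on ${\rm coker}(B)$ by the $\mathbb{Z}$-linear extension of $\phi$ to $\mathbb{Z}V$ (this is what the commutative diagram in the previous lemma legitimises), so it sends the class of $v$ to the class of $\phi(v)$. The two agree on a generating set, so $\underline{\phi}=K_{0}(\widetilde{\phi})$, and we are done.

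The only point that needs a little care — and really the only thing to write down carefully — is the compatibility of the chosen isomorphism $K_{0}(\textrm{C}^{\ast}(G))\cong{\rm coker}(B)$ with both the functor $K_{0}$ and the construction of $\underline{\phi}$: one must use the same explicit identification $[p_{v}]\leftrightarrow v$ on the source and the target of $\widetilde{\phi}$. This is legitimate because $\widetilde{\phi}$ carries the distinguished Cuntz--Krieger family $\{\{p_{v}\},\{S_{e}\}\}$ to the Cuntz--Krieger family $\{\{p_{\phi(v)}\},\{S_{\phi(e)}\}\}$, so the induced map on $K_{0}$ is computed on the distinguished generators exactly as above. Beyond this bookkeeping there is no genuine obstacle; the whole statement reduces to evaluating two homomorphisms on the classes $[p_{v}]$.
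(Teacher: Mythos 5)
Your argument is correct and is essentially identical to the paper's own proof: both establish $\underline{\phi}=K_{0}(\widetilde{\phi})$ by checking that the two maps agree on the generating classes $[p_{v}]$ under the identification $[p_{v}]\leftrightarrow v$ in ${\rm coker}(B)$. Your additional remark about using the same identification on source and target is a sensible bookkeeping point but does not change the substance.
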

            \begin{proof}
             We shall prove that $\underline{\phi}=K_{0}(\widetilde{\phi})$. But this is more or less straightforward once we note that the $K_{0}$-group of $\textrm{C}^{\ast}(G)$ is generated by the class of projections $\{p_{v}\}_{v\in V}$ in $K_{0}$-group and the action of $K_{0}(\widetilde{\phi})$ on a class $[p_{v}]$ is given by $K_{0}(\widetilde{\phi})([p_{v}])=[p_{\phi(v)}]$. Then identifying $v\in V$ with $[p_{v}]$, we see that the actions of $K_{0}(\widetilde{\phi})$ and $\underline{\phi}$ agree on the generators and hence they agree on $K_{0}(\textrm{C}^{\ast}(G))$.    
            \end{proof}

           \section{Main section}
           In this section, given a countable abelian group $A$, we construct a row finite directed graph $\Gamma(A)$ such that 
           \begin{itemize}
               \item (MC1) $K_{0}(\textrm{C}^{\ast}(\Gamma(A)))\cong A$.
               \item (MC2) Any $\phi\in{\rm Aut}(A)$ induces an automorphism $\Gamma(\phi)\in{\rm Aut}(\Gamma(A))$ such that $\phi=\underline{\Gamma(\phi)}\in{\rm Aut}(A)$.  
                        \end{itemize}
                         We shall achieve the above by a functorial construction. To that end, let $\mathcal{A}$ be the category whose objects are countable abelian groups and morphisms are group homomorphisms. Let $\mathcal{G}$ be the category whose objects are row-finite directed graphs without multiple edges admitting possibly loops such that a vertex can be a base to finitely many loops. The morphisms of the category $\mathcal{G}$ are graph homomorphisms. For completeness, let us recall the notion of graph homomorphism.
                        \begin{defn}
                            If $G_{1},G_{2}$ are two graphs in $\mathcal{G}$, then a homomorphism $\alpha:G_{1}\rightarrow G_{2}$ is a map from the vertices $V(G_{1})$ to $V(G_{2})$ such that whenever there is an edge $e$ in $G_{1}$ with $s(e)=v$, $r(e)=w$ one has corresponding edge $f$ in $G_{2}$ such that $r(f)=\alpha(w)$ and $s(f)=\alpha(v)$.
                        \end{defn}
                        Now we shall construct a functor $\Gamma:\mathcal{A}\rightarrow\mathcal{G}$ in such a way that $K_{0}\Big(\textrm{C}^{\ast}(\Gamma(A))\Big)$ is canonically isomorphic to $A$ for some abelian group $A\in\mathcal{A}$.\vspace{0.2in}\\
                        {\bf The main construction}: We first note that $A$ has a canonical presentation obtained as follows: let $F(A)$ denote the free abelian group generated by elements $a\in A$; there is a natural surjrctive group homomorphism:
                        \begin{align*}
                          &F(A)\rightarrow A\\
                          & \{a\}\mapsto a,
                        \end{align*}
                            whose kernel is generated by the elements
                            \begin{displaymath}
                                \{a\}+\{b\}-\{c\}
                            \end{displaymath}
                            whenever $a+b=c$ in $A$. Thus a presentation of $A$ is given by:\\
                            (i) generators: $\{a\}$, $a\in A$\\
                            (ii) relations: $\{a\}+\{b\}-\{c\}=0$, whenever $a+b=c$ in $A$.\\
                            Now we build the graph $\Gamma(A)$ as follows: for each $a\in A$, we add a vertex $v_{a}$. If there are no edges between $v_{a}$'s, the $K_{0}$ group of the graph $\textrm{C}^{\ast}$-algebra $\textrm{C}^{\ast}(\Gamma(A))$ is a free abelian group with generators $v_{a}$. Here with an abuse of notation, we identify a vertex with the corresponding generator in the $K_{0}$ group. Now for $a+b=c$ in $A$ we want the corresponding relation $v_{a}+v_b=v_{c}$ in the $K_{0}$ group. To this end, we add auxilliary vertices $u_{ab}$ and $u_{c}$ with edges as shown below:
                            \begin {center}
\begin {tikzpicture}[scale = 1.5]

\coordinate (p) at (0,0);
\coordinate (q) at (1,1);
\coordinate (r) at (2,0);
\coordinate (s) at (3,1);
\coordinate (t) at (3,0);

\edge{p}{q};
\edge {r}{q};
\edge {s}{q};
\edge {t}{s};

\smallloop {s};
\bigloop {s};

\labelpoint {p}{0}{-8}{v_a}
\labelpoint {r}{0}{-8}{v_b}
\labelpoint {t}{0}{-8}{v_c}
\labelpoint {q}{0}{8}{u_{ab}}
\labelpoint {s}{8}{-8}{u_{c}}
\end {tikzpicture}
\end {center}  

                            Then in the $K_{0}$ group, we have the following relations:
                            \begin{eqnarray}
                              u_{ab}=v_{a}+v_{b}+u_{c}\\
                                u_{c}=v_{c}+2u_{c}  
                            \end{eqnarray}
                            Combining the above two equations, we get $u_{ab}=v_{a}+v_{b}-v_{c}$. It remains to add the relation $u_{ab}=0$. This can be achieved as follows: observe that if we add one more auxilliary vertex $u_{ab}^{1}$ connected to $u_{ab}$ as below: 
\begin {center}
\begin {tikzpicture}[scale = 1.5]

\coordinate (p) at (0,0);
\coordinate (q) at (1,0);

\edge{q}{p};
\bigloop {p}
\labelpoint {p}{0}{-12}{u^1_{ab}}
\labelpoint {q}{0}{-10}{u_{ab}}
\end {tikzpicture}
\end {center}
\vspace {20pt}
                            then we get the relation
                            \begin{displaymath}
                                u^{1}_{ab}=u_{ab}^{1}+u_{ab},
                            \end{displaymath}
                            i.e. $u_{ab}=0$. To set $u_{ab}^{1}=0$, we inductively add vertices $u^{n}_{ab}$ for $n\geq 2$ as follows:
\begin {center}
\begin {tikzpicture}[scale = 1.5]

\coordinate (m) at (-1,0);
\coordinate (p) at (0,0);
\coordinate (q) at (1,0);
\coordinate (r) at (2,0);
\coordinate (s) at (3,0);
\coordinate (t) at (4,0);
\coordinate (u) at (5,0);

\edge{q}{p}
\edge {r}{q}
\edge {t}{s}
\edge {u}{t}

\bigloop {p}
\bigloop {q}
\bigloop {t}

\draw [densely dashed] (r) -- (s);
\draw [densely dashed] (m) -- (p);

\labelpoint {p}{0}{-12}{u^n_{ab}}
\labelpoint {q}{0}{-12}{u^{n-1}_{ab}}
\labelpoint {r}{0}{0}{};
\labelpoint {s}{0}{0}{};
\labelpoint {t}{0}{-12}{u^{1}_{ab}}
\labelpoint {u}{0}{-10}{u_{ab}}

\end {tikzpicture}
\end {center}
\vspace {20pt}

                            which gives as before 
                            \begin{displaymath}
                                u^{n}_{ab}=u^{n-1}_{ab}+u^{n}_{ab},
                            \end{displaymath}
                            i.e. $u_{ab}^{n}=0$ for all $n$ in the $K_{0}$-group. So we have been able to add the relation $v_{a}+v_{b}=v_{c}$ in the $K_{0}$-group. We repeat the construction for each triple $\{a,b,c\}$ in $A$ satisfying $a+b=c$.\\
                            \indent However, if $a=b\in A$, then $v_{a}=v_{b}$ which gives two edges between $u_{ab}$ and $v_{a}=v_{b}$. But this is not allowed and we solve this problem by always adding two intermediate vertices $w_{a}, w_{b}$ in the following way:
\begin {center}
\begin {tikzpicture}[scale = 1.5]

\coordinate (p) at (0,0);
\coordinate (q) at (0,1);
\coordinate (r) at (1,0);
\coordinate (s) at (1,1);

\edge {p}{q}
\edge {r}{s}

\labelpoint {p}{0}{-8}{v_a}
\labelpoint {q}{0}{8}{w_a}
\labelpoint {r}{0}{-8}{v_b}
\labelpoint {s}{0}{8}{w_b}

\draw (.5,-.5) node[] {$v_a \neq v_b$};

\coordinate (p) at (4,0);
\coordinate (q) at (3.5,1);
\coordinate (r) at (4.5,1);

\edge {p}{q}
\edge {p}{r}

\labelpoint {p}{0}{-8}{v_a}
\labelpoint {q}{0}{8}{w_a}
\labelpoint {r}{0}{8}{w_b}

\draw (4,-.5) node[] {$v_a = v_b$};

\end {tikzpicture}
\end {center}

                            Note that even if $v_{a}=v_{b}$, the vertices $w_{a}, w_{b}$ are distinct and we have $w_{a}=v_{a}$ and $w_{b}=v_{b}$ in the $K_{0}$-group. Now the construction can be repeated by adding $u_{ab}$ to $w_{a}, w_{b}$ instead of $v_{a},v_{b}$ as indicated below:
                            \begin {center}
\begin {tikzpicture}[scale = 1.5]

\coordinate (p) at (0,0);
\coordinate (q) at (0,1);
\coordinate (r) at (1,0);
\coordinate (s) at (1,1);
\coordinate (t) at (.5, 2);

\edge {p}{q}
\edge {r}{s}
\edge {q}{t}
\edge {s}{t}

\labelpoint {p}{0}{-8}{v_a}
\labelpoint {q}{-6}{6}{w_a}
\labelpoint {r}{0}{-8}{v_b}
\labelpoint {s}{6}{6}{w_b}
\labelpoint {t}{0}{10}{u_{ab}}

\draw (.5,-.5) node[] {$v_a \neq v_b$};

\coordinate (p) at (4,0);
\coordinate (q) at (3.5,1);
\coordinate (r) at (4.5,1);
\coordinate (s) at (4,2);

\edge {p}{q}
\edge {p}{r}
\edge {q}{s}
\edge {r}{s}

\labelpoint {p}{0}{-8}{v_a}
\labelpoint {q}{-6}{6}{w_a}
\labelpoint {r}{6}{6}{w_b}
\labelpoint {s}{0}{10}{u_{ab}}

\draw (4,-.5) node[] {$v_a = v_b$};

\end {tikzpicture}
\end {center}
                            The graph obtained in this way is denoted naturally by $\Gamma(A)$. From the construction, it is clear that $\Gamma(A)$ belongs to the category $\mathcal{G}$ and $K_{0}\Big(\textrm{C}^{\ast}(\Gamma(A))\Big)$ is isomorphic to $A$. The isomorphism is obtained by sending $v_{a}$ to $a$. To finish the construction of the functor $\Gamma$, given a group homomorphism $\phi:A\rightarrow B$,  we need to assign a graph homomorphism $\Gamma(\phi):\Gamma(A)\rightarrow\Gamma(B)$. But this is more or less obvious. In deed, given $\phi:A\rightarrow B$, $\Gamma(\phi)$ can be defined by sending $v_{a}$ to $v_{\phi(a)}$. Extension to the auxilliary verices is obvious as $a+b=c$ implies $\phi(a)+\phi(b)=\phi(c)$. It is clear from the construction that $\Gamma(\phi)$ is a graph homomorphism, $\Gamma(\phi\circ\psi)=\Gamma(\phi)\circ\Gamma(\psi)$ for group homomorphisms
                            \begin{displaymath}
                                A\longrightarrow^{\psi} B\longrightarrow^{\phi} C,
                            \end{displaymath}
                            and $\Gamma(\textrm{id}_{A})=\textrm{id}_{\Gamma(A)}$. Therefore, for any $\phi\in\textrm{Aut}(A)$, $\Gamma(\phi)\in\textrm{Aut}(\Gamma(A))$.
                            \begin{lem}
                                Let $\phi\in\textrm{Aut}(A)$ for a countable abelian group $A$. Then $\underline{\Gamma(\phi)}=\phi$.
                            \end{lem}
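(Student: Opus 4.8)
The plan is to check the equality $\underline{\Gamma(\phi)}=\phi$ on a convenient set of generators of $K_{0}(\textrm{C}^{\ast}(\Gamma(A)))$, namely the classes of the vertices $v_{a}$. First I would record what the isomorphism $\Theta\colon K_{0}(\textrm{C}^{\ast}(\Gamma(A)))\xrightarrow{\ \sim\ }A$ actually is. By the construction, $K_{0}(\textrm{C}^{\ast}(\Gamma(A)))\cong\textrm{coker}(B)$ is generated by the classes of all vertices of $\Gamma(A)$: the $v_{a}$ for $a\in A$ together with the auxilliary vertices $w_{a}$, $u_{ab}$, $u_{c}$ and $u^{n}_{ab}$. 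The relations imposed by $B$ give $w_{a}=v_{a}$, $u_{c}=-v_{c}$, $u_{ab}=v_{a}+v_{b}-v_{c}=0$ and $u^{n}_{ab}=0$ in $K_{0}$, so the classes $\{[v_{a}]\}_{a\in A}$ already generate $K_{0}(\textrm{C}^{\ast}(\Gamma(A)))$, and $\Theta$ is precisely the homomorphism determined by $\Theta([v_{a}])=a$ for all $a\in A$.

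Next I would unwind the definition of $\underline{\Gamma(\phi)}$. Since $\phi\in\textrm{Aut}(A)$, we have $\Gamma(\phi)\in\textrm{Aut}(\Gamma(A))$, so $\underline{\Gamma(\phi)}$ is defined via the commutative square of the first lemma of Section~2.2. By Lemma~\ref{lift}, $\underline{\Gamma(\phi)}=K_{0}(\widetilde{\Gamma(\phi)})$, and by the functoriality of $K_{0}$ recalled in Section~2.1, $K_{0}(\widetilde{\Gamma(\phi)})$ sends $[p_{w}]$ to $[p_{\Gamma(\phi)(w)}]$ for any vertex $w$. As $\Gamma(\phi)$ is defined on vertices by $v_{a}\mapsto v_{\phi(a)}$ (with the auxilliary vertices carried along accordingly), this yields $\underline{\Gamma(\phi)}([v_{a}])=[v_{\phi(a)}]$ for every $a\in A$.

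Finally I would combine the two observations. Under $\Theta$, the class $[v_{a}]$ corresponds to $a$ and $[v_{\phi(a)}]$ corresponds to $\phi(a)$; hence the conjugated map $\Theta\circ\underline{\Gamma(\phi)}\circ\Theta^{-1}\colon A\to A$ sends $a$ to $\phi(a)$ for all $a\in A$. Since $\{[v_{a}]\}_{a\in A}$ generate $K_{0}(\textrm{C}^{\ast}(\Gamma(A)))$, agreement on these generators forces $\Theta\circ\underline{\Gamma(\phi)}\circ\Theta^{-1}=\phi$, which is the assertion $\underline{\Gamma(\phi)}=\phi$ once $\Theta$ is suppressed. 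The only point that needs a little care — the ``main obstacle'', such as it is — is the bookkeeping: confirming that $\Gamma(\phi)$ maps each auxilliary vertex consistently with $v_{a}\mapsto v_{\phi(a)}$ (so the functorial graph automorphism and the commuting square genuinely apply), and that in $\textrm{coker}(B)$ the auxilliary vertices contribute nothing new, so that checking generators $[v_{a}]$ is enough. Both are immediate from the explicit edge diagrams in the main construction, so no real difficulty arises.
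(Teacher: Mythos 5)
Your proposal is correct and follows essentially the same route as the paper's own (much terser) proof: identify $K_{0}(\textrm{C}^{\ast}(\Gamma(A)))$ with $A$ via $[v_{a}]\mapsto a$, observe that $\underline{\Gamma(\phi)}$ sends $[v_{a}]$ to $[v_{\phi(a)}]$, and conclude by agreement on generators. Your extra bookkeeping (that $w_{a}=v_{a}$, $u_{c}=-v_{c}$, $u_{ab}=u^{n}_{ab}=0$ in the cokernel, so the $[v_{a}]$ indeed generate) is accurate and merely makes explicit what the paper leaves implicit.
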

                            \begin{proof}
                                This follows essentially from the construction. Note that the $K_{0}$ group of $\textrm{C}^{\ast}(\Gamma(A))$ is generated by the elements $\{v_{a}, a\in A\}$ and the isomorphism with $A$ is obtained by sending $v_{a}$ to $a\in A$ canonically. Then the action of $\Gamma(\phi)$ on $v_{a}$ is by definition $v_{\phi(a)}$ which by the identification of $K_{0}$ group with $A$ is nothing but the map $\phi$.
                            \end{proof}
                            This completes our main construction satisfying (MC1) and (MC2).
                        \begin{cor}
                            Every $\phi\in{\textrm {Aut}}\Big(K_{0}(C^{\ast}(\Gamma(A)))\Big)$ is a lift. 
                        \end{cor}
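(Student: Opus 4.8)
The plan is to simply chain together the three ingredients already assembled: the canonical isomorphism (MC1), the lifting property (MC2) of the functor $\Gamma$, and Lemma \ref{lift}. No genuinely new idea is needed; the work is entirely in being careful about the identifications.

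\emph{Step 1: transport the given automorphism to $\mathrm{Aut}(A)$.} Let $\phi\in\mathrm{Aut}\big(K_{0}(C^{\ast}(\Gamma(A)))\big)$ be arbitrary. By (MC1) there is a canonical group isomorphism $\theta\colon K_{0}(C^{\ast}(\Gamma(A)))\xrightarrow{\ \cong\ }A$ sending $[p_{v_{a}}]$ to $a$. Conjugating by $\theta$ yields $\theta\circ\phi\circ\theta^{-1}\in\mathrm{Aut}(A)$; call it $\psi$. (In fact $\phi\mapsto\psi$ is a group isomorphism $\mathrm{Aut}(K_{0}(C^{\ast}(\Gamma(A))))\cong\mathrm{Aut}(A)$, but we only need that $\psi$ is \emph{some} automorphism of $A$.)

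\emph{Step 2: realise $\psi$ by a graph automorphism.} Since $\psi\in\mathrm{Aut}(A)$, property (MC2) gives a graph automorphism $\Gamma(\psi)\in\mathrm{Aut}(\Gamma(A))$ with $\underline{\Gamma(\psi)}=\psi$ after the identification $\theta$; concretely $\Gamma(\psi)$ sends $v_{a}$ to $v_{\psi(a)}$ on the principal vertices and is determined on the auxiliary vertices $u_{ab},u_{c},u_{ab}^{n},w_{a}$ by functoriality, as explained in the construction. By the last Lemma preceding the corollary, $\underline{\Gamma(\psi)}$ corresponds under $\theta$ to $\psi$, i.e.\ $\theta\circ\underline{\Gamma(\psi)}\circ\theta^{-1}=\psi$, hence $\underline{\Gamma(\psi)}=\theta^{-1}\circ\psi\circ\theta=\phi$.

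\emph{Step 3: conclude it is a lift.} Applying Lemma \ref{lift} to the row-finite directed graph $\Gamma(A)$ and the graph automorphism $\Gamma(\psi)$, we get $\underline{\Gamma(\psi)}=K_{0}\big(\widetilde{\Gamma(\psi)}\big)$ where $\widetilde{\Gamma(\psi)}\in\mathrm{Aut}(C^{\ast}(\Gamma(A)))$ is the induced $\mathrm{C}^{\ast}$-algebra automorphism. Combining with Step 2, $\phi=K_{0}\big(\widetilde{\Gamma(\psi)}\big)$, so $\phi$ is the $K_{0}$-image of an automorphism of $C^{\ast}(\Gamma(A))$; that is precisely the definition of a lift. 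Since $\phi$ was arbitrary, every element of $\mathrm{Aut}\big(K_{0}(C^{\ast}(\Gamma(A)))\big)$ is a lift.

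There is no real obstacle here; the only point requiring a little care is keeping the canonical identification $\theta$ of (MC1) consistent across Steps 1–3, so that ``$\underline{\Gamma(\psi)}=\psi$'' (an equality of maps on $A$) is correctly read back as ``$\underline{\Gamma(\psi)}=\phi$'' (an equality of maps on $K_{0}$). If desired, one can record the mild strengthening that the assignment $\phi\mapsto\widetilde{\Gamma(\theta\phi\theta^{-1})}$ exhibits a group homomorphism $\mathrm{Aut}(K_{0}(C^{\ast}(\Gamma(A))))\to\mathrm{Aut}(C^{\ast}(\Gamma(A)))$ splitting $K_{0}$, which immediately shows $\mathrm{Aut}(K_{0}(C^{\ast}(\Gamma(A))))$ embeds into $\mathrm{Kout}(C^{\ast}(\Gamma(A)))$ as claimed in the introduction.
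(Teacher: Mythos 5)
Your proof is correct and follows essentially the same route as the paper: use the canonical isomorphism of (MC1) to view $\phi$ as an automorphism of $A$, realise it as $\underline{\Gamma(\phi)}$ via (MC2), and invoke Lemma \ref{lift} to conclude it equals $K_{0}\big(\widetilde{\Gamma(\phi)}\big)$. The paper suppresses the identification $\theta$ that you track explicitly, but the argument is the same.
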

                        \begin{proof}
                            By construction, $\phi=\underline{\Gamma(\phi)}$ for $\Gamma(\phi)\in{\rm Aut}(\Gamma(A))$. By Lemma \ref{lift}, $\underline{\Gamma(\phi)}$ and consequently $\phi$ is a lift. 
                        \end{proof}
                        \begin{cor}
                            ${\rm Aut}(A)$ is a subgroup of the $K$-outer automorphism group of the $C^{\ast}$-algebra $C^{\ast}(\Gamma(A))$.
                        \end{cor}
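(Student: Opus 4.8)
The plan is to produce an explicit injective group homomorphism $s$ from $\textrm{Aut}(A)$ into the $K$-outer automorphism group $\textrm{Kout}(\textrm{C}^{\ast}(\Gamma(A)))$; its image is then the required subgroup. Recall from the introduction that every approximately inner automorphism induces the trivial automorphism on $K_{0}$, so the functor $K_{0}$ descends to a group homomorphism $\Theta\colon\textrm{Kout}(\textrm{C}^{\ast}(\Gamma(A)))\to\textrm{Aut}(K_{0}(\textrm{C}^{\ast}(\Gamma(A))))$, which via the canonical isomorphism $K_{0}(\textrm{C}^{\ast}(\Gamma(A)))\cong A$ from (MC1) we regard as a homomorphism into $\textrm{Aut}(A)$.

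First I would set $s(\phi):=[\widetilde{\Gamma(\phi)}]$, the class in $\textrm{Kout}(\textrm{C}^{\ast}(\Gamma(A)))$ of the $\textrm{C}^{\ast}$-automorphism induced by the graph automorphism $\Gamma(\phi)\in\textrm{Aut}(\Gamma(A))$; this makes sense because $\Gamma$ is a functor (established in the main construction, so $\Gamma(\phi)$ is indeed a graph automorphism when $\phi$ is a group automorphism) and because any graph automorphism induces a $\textrm{C}^{\ast}$-automorphism via the universal property. That $s$ is a group homomorphism follows from $\Gamma(\phi\circ\psi)=\Gamma(\phi)\circ\Gamma(\psi)$ together with the evident identity $\widetilde{\alpha\circ\beta}=\widetilde{\alpha}\circ\widetilde{\beta}$, which one checks directly on the generating Cuntz--Krieger family.

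Next I would verify that $\Theta\circ s=\textrm{id}_{\textrm{Aut}(A)}$. Indeed, $\Theta(s(\phi))=K_{0}(\widetilde{\Gamma(\phi)})=\underline{\Gamma(\phi)}$ by Lemma \ref{lift}, and the preceding lemma identifies $\underline{\Gamma(\phi)}$ with $\phi$ under $K_{0}(\textrm{C}^{\ast}(\Gamma(A)))\cong A$. Since $s$ admits the left inverse $\Theta$, it is injective, and hence $\textrm{Aut}(A)\cong s(\textrm{Aut}(A))$ is a subgroup of $\textrm{Kout}(\textrm{C}^{\ast}(\Gamma(A)))$, which is exactly the assertion.

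I do not expect a serious obstacle here: essentially all the content is already in the construction of $\Gamma$ and in the two lemmas giving $K_{0}(\widetilde{\phi})=\underline{\phi}$ and $\underline{\Gamma(\phi)}=\phi$. The only points worth care are that $s$ must be regarded as landing in the quotient $\textrm{Kout}$ rather than in $\textrm{Aut}(\textrm{C}^{\ast}(\Gamma(A)))$, and that injectivity uses precisely the cited fact that $\overline{\textrm{Inn}}$ acts trivially on $K_{0}$; in particular no direct analysis of $\overline{\textrm{Inn}}(\textrm{C}^{\ast}(\Gamma(A)))$ is required.
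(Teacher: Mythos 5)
Your proposal is correct and is essentially the paper's own proof: the same map $\phi\mapsto[\widetilde{\Gamma(\phi)}]$, with injectivity coming from the fact that approximately inner automorphisms act trivially on $K_{0}$ together with $K_{0}(\widetilde{\Gamma(\phi)})=\underline{\Gamma(\phi)}=\phi$. The only cosmetic difference is that you package injectivity as the retraction $\Theta\circ s=\mathrm{id}_{\mathrm{Aut}(A)}$, whereas the paper argues directly that a trivial class in $\mathrm{Kout}$ forces $\phi$ to be trivial.
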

                        \begin{proof}
                            Given $\phi\in \textrm {Aut}(A)$, by the main construction, we have an element $\Gamma(\phi)\in\textrm {Aut}(\Gamma(A))$ such that $\underline{\Gamma(\phi)}=\phi$. Then we define a map $\beta:\textrm {Aut}(A)\rightarrow {\rm Kout}\Big(\textrm{C}^{\ast}(\Gamma(A))\Big)$ by
                            \begin{displaymath}
                                \beta(\phi)=[\widetilde{\Gamma(\phi)}],
                            \end{displaymath}
                            where $[\widetilde{\Gamma(\phi)}]$ is the class of $\widetilde{\Gamma(\phi)}\in{\rm Aut}\Big(\textrm{C}^{\ast}(\Gamma(A))\Big)$ in the group ${\rm Kout}\Big(\textrm{C}^{\ast}(\Gamma(A))\Big)$. By the main construction, $\beta$ is a group homomorphism. If $\beta(\phi)$ is the trivial element for some $\phi$, then $\widetilde{\Gamma(\phi)}\in\overline{\rm Inn}(\textrm{C}^{\ast}(\Gamma(A)))$ and therefore $K_{0}(\widetilde{\Gamma(\phi)})$ is the trivial automorphism of $K_{0}(\textrm{C}^{\ast}(\Gamma(A)))$. But by Lemma \ref{lift}, $K_{0}(\widetilde{\Gamma(\phi)})=\underline{\Gamma(\phi)}$ which, by construction, is $\phi$. Therefore, $\beta$ is injective, identifying $\textrm {Aut}(A)$ as a subgroup of $\textrm {Kout}\Big(\textrm{C}^{\ast}(\Gamma(A))\Big)$.
                        \end{proof}
                        \begin{rem}
                            It is clear from the construction that the graphs admit multiple loops so that the corresponding graph $C^{\ast}$-algebras can not be AF. This can also be seen from the $K$-outer automorphism groups. Recall that the $K$-outer automomrphism group of an AF algebra is isomorphic to the positive group isomorphism of the $K_{0}$-group whereas here the whole automorphism group of the $K_{0}$-group is a subgroup of the $K$-outer automorphism group. However, by the construction, the graphs are not co-final (see \cite{raeburn}) so that we cannot conclude whether the corresponding graph $C^{\ast}$-algebras are purely infinite or not. 
                        \end{rem}
                        {\bf Acknowledgement}: The second author is partially supported by the JC Bose National fellowship given by DST, Government of India. The third author is partially supported by ANRF/SERB MATRICS grant (Grant number MTR/2022/000515). 
                        \begin{bibdiv}
   \begin{biblist}
   \bib{elliot}{article}{
   author={Elliott E.G.},
   title={On the classification of inductive limits of sequences of semisimple finite-dimensional algebras},
   journal={J. Algebra},
   volume={38},
   date={1976},
   pages={29-44}
   }
   \bib{goswami}{article}
   {author={Goswami D.},
   author={Samadder S.},
   title={In preparation}
   }
   \bib{joardar}{article}
   {
   author={Joardar S.},
   author={Mandal A.},
   title={Quantum symmetry of graph $C^{\ast}$-algebras at critical inverse temperature}, 
   journal={Stud. Math.}, 
   volume={256},
   date={2021}, 
   pages={1-20}
   }
   \bib{pederson}{book}
   {
   author={Pedersen G.K.},
   title={$C^{\ast}$-algberas and their automorphism groups}, 
   publisher={Academic Press, London}, 
   date={1979}, 
   }
   \bib{phillips}{article}
   {
   author={Phillips N.C.},
   title={A classification theorem for nuclear purely infinite simple $C^{\ast}$-algebras}, 
   journal={Documenta Math.}, 
   number={5},
   date={2000}, 
   pages={49-114}
   }
   \bib{raeburn}{book}
   {
   author={Raeburn I.},
   title={Graph algebras}, 
   publisher={American Mathematical Society}, 
   date={2005}, 
   }
   \bib{rordam}{book}
   {
   author={Rordam M.},
   author={Larsen F.},
   author={Lausten N.},
   title={An Introduction to $K$-theory for graph $C^{\ast}$-algebras}, 
   publisher={Cambridge University Press}, 
   date={2000}, 
   }
   \bib{szymanski}{article}{
   author={Szyma\'nski W.},
   title={On semiprojectivity of $C^{\ast}$-algebras of directed graphs}, 
   journal={Proc. Amer. Math. Soc.}, 
   volume={130},
   date={2001}, 
   pages={1391-1399}
   }
   \bib{sz}{article}{
   author={Szyma\'nski W.},
   title={The range of K-invariant for $C^{\ast}$-algebras of infinite graphs}, 
   journal={Indiana University Mathematics Journal}, 
   volume={51},
   number={1},
   date={2002}, 
   pages={539-549}
   }
   \end{biblist}
   \end{bibdiv}
            \end{document}